\documentclass[11pt, a4paper]{amsart}
\usepackage{latexsym,mathrsfs,bm}
\usepackage{graphicx,color,url}
\usepackage{amssymb,mathrsfs}
\usepackage{bm}
\addtolength{\textwidth}{3 truecm}
\addtolength{\textheight}{1 truecm}
\setlength{\voffset}{-0.6 truecm}
\setlength{\hoffset}{-1.3 truecm}

\usepackage{hyperref}

\usepackage[english]{babel}
\usepackage{paralist}
\usepackage{amsthm,amsfonts,amssymb,amsmath}
\usepackage{mathtools}
\usepackage[utf8]{inputenc}

\newcommand{\R}{\mathbb{R}}
\newcommand{\Q}{\mathbb{Q}}
\newcommand{\F}{\mathbb{F}}

\newcommand{\N}{\mathbb{N}}

\newcommand{\Z}{\mathbb{Z}}

\newcommand{\es}[1]{\begin{equation}\begin{split}#1\end{split}\end{equation}}
\newcommand{\est}[1]{\begin{equation*}\begin{split}#1\end{split}\end{equation*}}

\newcommand{\PP}{\mathbb{P}}

\newcommand{\nequiv}{\not\equiv}

\newcommand{\tn}[1]{\textnormal{#1}}

\newcommand{\leg}[2]{\left(\frac{#1}{#2}\right)}

\newtheorem*{theo*}{Theorem}
\newtheorem{theo}{Theorem}

\newtheorem{ezer}{Exercise}

\newtheorem{conj}[ezer]{Conjecture}

\newtheorem{lemma}{Lemma}

\newtheorem{remark}{Remark}

\newtheorem*{rem*}{Remark}
\def\sumstar{\operatornamewithlimits{\sum\nolimits^*}}

\newcommand{\Res}{\operatorname*{Res}}

\let\originalleft\left
\let\originalright\right
\renewcommand{\left}{\mathopen{}\mathclose\bgroup\originalleft}
\renewcommand{\right}{\aftergroup\egroup\originalright}

\definecolor{pink}{rgb}{1,.2,.6}
\definecolor{orange}{rgb}{0.7,0.3,0}
\definecolor{blue}{rgb}{.2,.6,.75}
\definecolor{green}{rgb}{.4,.7,.4}

\newcommand{\suppress}[1]{}

\newcommand{\Es}{{\mathcal E}}
\newcommand{\Sf}{{\mathcal S}}
\newcommand{\Rf}{{\mathcal R}}
\newcommand{\rf}{r_{\Es}}

\numberwithin{equation}{section}

\title{On the typical rank of elliptic curves over $\Q(T)$} 

\author[F.~Battistoni]{Francesco Battistoni}
\address{Laboratoire de math\'ematiques de Besan\c con, Universit\'e de Bourgogne Franche-Comt\'e, CNRS UMR 6623, 16, route de ray, 25030 Besan\c con cedex, France}
\email{francesco.battistoni@univ-fcomte.fr}

\author[S.~Bettin]{Sandro Bettin}
\address{Dipartimento di Matematica, Universit\`a di Genova, Via Dodecaneso 35, 16146 Genova, Italy}
\email{bettin@dima.unige.it}

\author[C.~Delaunay]{Christophe Delaunay}
\address{Laboratoire de math\'ematiques de Besan\c con, Universit\'e de Bourgogne Franche-Comt\'e, CNRS UMR 6623, 16, route de ray, 25030 Besan\c con cedex, France}
\email{christophe.delaunay@univ-fcomte.fr}

\makeatletter
\@namedef{subjclassname@2020}{%
  \textup{2020} Mathematics Subject Classification}
\makeatother

\begin{document}

\begin{abstract}
We give upper bounds for the number of elliptic curves defined over $\Q(T)$ in some families having positive rank, obtaining in particular that these form a subset of density zero. This confirms Cowan's conjecture~\cite{Cowan} in the case $m,n\leq2$.

\end{abstract}

\subjclass[2020]{11G05, 14G05}

\keywords{
Elliptic curves, rank, rational points.}

\maketitle

\section{Introduction}
Let  ${\Es}$ be an elliptic curve defined over $\Q(T)$ given by a Weierstrass equation
\begin{equation}\label{Weq}
\Es \colon \quad Y^2 = X^3 + \alpha_2(T)X^2 + \alpha_4(T)X+\alpha_6(T),
\end{equation}
where $\alpha_2(T), \alpha_4(T)$ and $\alpha_6(T)$ are polynomials in, say, $\Z[T]$. Since $\Es$ is an elliptic curve, it has to be non-singular meaning that the discriminant 
of equation~\eqref{Weq} is not $0$ as a polynomial. Furthermore, we say that $\Es$ is iso-trivial if the $j$-invariant associated to \eqref{Weq} is a constant rational function 
in $\Q(T)$. The group of $\Q(T)$-rational points of $\Es$ is finitely generated and we denote by $\rf$ the rank of $\Es(\Q(T))$.
The rank $\rf$ is an important arithmetical invariant 
of elliptic curves over $\Q(T)$ and is related to several questions in number theory. For example, Silverman's specialization theorem asserts that for almost all 
specializations of $T$ at $t \in \Q$, the rank of the associated elliptic curve defined over $\Q$ is at least $\rf$. This has direct consequences on
the study of the distribution of ranks of families of elliptic curves defined over $\Q$, or over number fields $K$ \cite{miller, david_all, stewart_top, rubin_silverberg} and on the research of high rank elliptic curves \cite{mestre, fermigier, arms_all}.
The study of the rank, $\rf$, has also 
some impact on a question in arithmetic geometry asking whether the set of $\Q$-rational points of equation~\eqref{Weq} seen as an hypersurface in the three 
dimensional affine  space over $\Q$ is Zariski-dense.
The question is positively answered whenever $\rf >0$, and if  $\rf=0$ a sufficient criterion consists in showing that there 
exist  infinitely many specializations of  $T$ at $t \in \Q$ such that the rank of the corresponding curve over $\Q$ is positive (see \cite{mazur} and recent works by J.~Desjardins on the 
subject, \cite{desjardins1, desjardins2}). Under the parity conjecture this can be done, for example, by studying the behavior of the root 
numbers of the specializations. 

In a recent work~\cite{Cowan}, Cowan considered the typical value of $\rf$, giving an heuristic argument suggesting that for most natural families of elliptic curves over 
$\Q(T)$ the rank $\rf$ is zero for almost all member of the family. More specifically, letting $\| P \|$ be the absolute height\footnote{Cowan stated the conjecture for the Mahler measure $\mu$, mentioning that one could also choose the height instead. By the inequalities $\binom{n}{[n/2]}^{-1}\|P\|\leq \mu(P)\leq \sqrt{n+1}\|P\|$ for $P$ of degree $n$ (see \cite[Lemma 1.6.7]{Bombieri_Gubler}) the two choices are equivalent.} 
 of a polynomial $P\in\Z[X]$ (i.e. the maximum of the absolute values of the coefficients of $P$), and writing for any $m,n,H\in \N$ 
\est{
\Sf_{m,n}&:=\{\Es \mid \alpha_2=0,\ \deg(\alpha_4)\leq m,\ \deg(\alpha_6)\leq n,\ \Es \text{ non-singular} \},\\
\Sf_{m,n}(H)&:=\{\Es\in \mathcal F_{m,n} \mid \| \alpha_4\|<H^3,\ \|\alpha_6\|<H^2 \},
}
he made the following conjecture.
\begin{conj}\label{cowan}
Let $m,n\in\N$. Then, as $H\to\infty$ we have
\es{\label{avr}
\lim_{H\to\infty}\frac{1}{\sharp\Sf_{m,n}(H)}\sum_{\Es\in\Sf_{m,n}(H)}\rf=0.
}
In particular, 
\es{\label{tyr}
\lim_{H\to\infty}\frac{\sharp\{\Es \in\Sf_{m,n}(H)\mid \rf\geq1 \}}{\sharp\Sf_{m,n}(H)}=0,
}
that is almost all elliptic curves defined over $\Q(T)$ have rank zero.
\end{conj}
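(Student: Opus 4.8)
The plan is to deduce the averaged statement \eqref{avr} from the counting statement \eqref{tyr}, and then to attack \eqref{tyr} directly. For the reduction, note that for fixed $m,n$ every $\Es\in\Sf_{m,n}$ gives rise to an elliptic surface over $\PP^1$ whose geometric invariants (the number and types of singular fibres, and the Euler number) are bounded solely in terms of $m$ and $n$; hence by the Shioda--Tate formula the geometric Mordell--Weil rank, and a fortiori $\rf$, is bounded by a constant $R=R(m,n)$. Therefore $\sum_{\Es\in\Sf_{m,n}(H)}\rf\le R\cdot\sharp\{\Es\in\Sf_{m,n}(H):\rf\ge1\}$, so that \eqref{avr} follows once \eqref{tyr} is known. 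Since $\sharp\Sf_{m,n}(H)\asymp H^{3m+2n+5}$ (the singular locus being a negligible, codimension-one subvariety), it suffices to prove $\sharp\{\Es\in\Sf_{m,n}(H):\rf\ge1\}=o(H^{3m+2n+5})$.

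The core observation is that a positive-rank curve is essentially pinned down by one of its sections, and that passing through a given section is an \emph{affine-linear} condition on $(\alpha_4,\alpha_6)$. If $\rf\ge1$ then $\Es(\Q(T))$ contains a non-torsion point $P$; writing its coordinates in lowest terms as $x=a/e^2$, $y=b/e^3$ with $a,b,e\in\Z[T]$, $e$ monic and $\gcd(a,e)=\gcd(b,e)=1$, the equation \eqref{Weq} (with $\alpha_2=0$) becomes the polynomial identity $b^2=a^3+\alpha_4\,a\,e^4+\alpha_6\,e^6$. Thus $e^4\mid b^2-a^3$ and $(b^2-a^3)/e^4=\alpha_4\,a+\alpha_6\,e^2$; reducing modulo $a$ and using $\gcd(a,e)=1$ shows that $\alpha_6\equiv b^2e^{-6}\pmod{a}$ and that $\alpha_4$ is then determined by $\alpha_6$, so for fixed $(a,b,e)$ the pair $(\alpha_4,\alpha_6)$ is determined up to an explicit, low-dimensional linear ambiguity (the homogeneous solutions $(\alpha_4,\alpha_6)\mapsto(\alpha_4+e^2t,\alpha_6-a\,t)$).

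To convert this into a count I would first bound the complexity of the section: choosing $P$ of minimal canonical height and using the theory of Mordell--Weil lattices of elliptic surfaces, the degrees $\deg a,\deg b,\deg e$ are bounded by a constant $D=D(m,n)$, and when $m,n\le2$ the surfaces are rational elliptic surfaces, for which $D$ is small. One then stratifies the positive-rank curves by the section data $(a,b,e)$ and, for each stratum, estimates the number of admissible $(\alpha_4,\alpha_6)$ in the box $\|\alpha_4\|<H^3$, $\|\alpha_6\|<H^2$ using the divisibility and congruence constraints above; summing over the finitely many degree configurations and over all $(a,b,e)$ whose coefficients are forced into the relevant ranges should yield the desired bound $o(H^{3m+2n+5})$. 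The isotrivial curves and the (excluded) torsion sections are separated off beforehand and shown to contribute negligibly, since they form lower-dimensional families.

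The main obstacle is the uniform counting in this last step. The difficulty is that the coefficients of $a,b,e$ are \emph{not} bounded independently of $H$: they grow with the height of the curve, so the number of section data $(a,b,e)$ being summed is itself a power of $H$, and a naive estimate merely recovers the trivial bound $H^{3m+2n+5}$. Genuine saving has to be extracted from the interplay between the height normalisation $\|\alpha_4\|<H^3$, $\|\alpha_6\|<H^2$ and the relation $b^2-a^3=\alpha_4\,a\,e^4+\alpha_6\,e^6$, which forces a near-cancellation $\alpha_4\,a\approx b^2-a^3$ together with the congruence $\alpha_6\equiv b^2e^{-6}\pmod a$. Balancing the sizes of the section coefficients against these constraints requires a careful case-by-case analysis of the possible degree configurations of $(a,b,e,\alpha_4,\alpha_6)$, and it is precisely the proliferation of these cases for large $m,n$ that I expect to confine an unconditional treatment to small values such as $m,n\le2$.
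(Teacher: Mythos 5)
The statement you are addressing is a conjecture, and the paper proves it only in the range $1\le m,n\le 2$; your proposal does not establish it for general $m,n$, and in fact does not establish it even for $m,n\le2$, because the decisive quantitative step is explicitly left open. Your reduction of \eqref{avr} to \eqref{tyr} via a uniform bound $\rf\le R(m,n)$ is sound in the range where the paper works (the paper uses the bound $\rf\le5$ from Lemma~\ref{br} and disposes of the isotrivial stratum via~\cite{bhargava_shankar}). But for \eqref{tyr} your strategy --- stratify the positive-rank curves by a minimal-height section $(a,b,e)$ and count the pairs $(\alpha_4,\alpha_6)$ passing through it --- stalls exactly where you say it does: the coefficients of $(a,b,e)$ grow with $H$, the number of strata is itself a power of $H$, and no saving is extracted from the congruence $\alpha_6\equiv b^2e^{-6}\pmod a$ beyond the trivial bound $H^{3m+2n+5}$. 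An acknowledged obstacle is not a proof, so as written the argument yields nothing.

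The paper's route for $m,n\le2$ is entirely different and avoids sections altogether. Lemma~\ref{br} (imported from~\cite{BBD}) bounds $\rf$ by $\Omega(P)-1$ for an explicit auxiliary polynomial $P$ built from the coefficients of the curve (the resultant $M_{B,C}$, the discriminant $B^2-4AC$, or $M_{B^2-4AC,A}$, according to the shape of the family), so $\rf\ge1$ forces $P$ to be reducible over $\Q$. The problem then becomes counting reducible members in a parametrized family of polynomials of bounded degree, which is handled by the Tur\'an sieve (Lemma~\ref{lt}): for a positive-density set of primes $p$ one computes that $P\bmod p$ is irreducible for a proportion $1/\delta+O(1/p)$ of the parameters, using exact counts of irreducible (even, lacunary) polynomials over $\F_p$ (Lemmas~\ref{irpo}--\ref{even_lac_pol}), and one concludes that reducibility over $\Q$ occurs for a proportion $O(H^{-1/3}(\log H\log\log H)^{2/3})$ of the family. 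This reduction to irreducibility is the ingredient your proposal is missing, and it also explains why the paper's theorem stops at $m,n\le2$: Lemma~\ref{br} is not expected to generalize once the underlying elliptic surface is no longer rational (cf.\ Remark~\ref{rer2}), so a proof of the full conjecture would indeed require something closer to the global strategy you sketch, carried out with a genuine counting input that you have not supplied.
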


Notice that this conjecture is in contrast with what is expected to hold for elliptic curves over $\Q$. Indeed, considerations on the root numbers lead to the belief that for most 
natural families of elliptic curves over $\Q$ one has that $50\%$ of the curves have rank $1$ and $50\%$ have rank 0. See for example~\cite{goldfeld} for the case of 
quadratic twists and see~\cite{BDD} for the study of some exceptional families.

\medskip

In this note we use the results of \cite{BBD} to prove Cowan's conjecture in the case $m,n\leq2$. Notice that in this case the elliptic surfaces defined by \eqref{Weq} over 
$\PP^1_\Q$ are all rational elliptic surfaces since $\deg(\alpha_i)\leq i$ and, with $m,n\leq2$, the discriminant of $\Es$ can not be a constant time the twelfth power 
of a degree one polynomial (cf.~\cite{schutt_shioda}). Rational elliptic surfaces over $\PP^1_\Q$ are the ones with a geometric genus, $g_{\Es}$, equal to zero leading to 
the simplest geometry  and an easier control of the Mordell-Weil rank over $\Q(T)$ of the associated elliptic curve. More precisely, we obtain the following result.

\begin{theo}\label{mainc} 
Conjecture~\ref{cowan} holds for $1\leq m,n\leq2$.
\end{theo}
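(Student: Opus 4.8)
The plan is to deduce \eqref{avr} from \eqref{tyr}, to use \cite{BBD} to recast $\rf\ge1$ as the solvability of an explicit Diophantine equation, and then to count its solutions in the box with a power saving over the total. For the reduction, every $\Es\in\Sf_{m,n}$ with $m,n\le2$ is a rational elliptic surface, so by Shioda--Tate $\rf\le 8$; hence
\est{
\frac{1}{\sharp\Sf_{m,n}(H)}\sum_{\Es\in\Sf_{m,n}(H)}\rf\ \le\ 8\,\frac{\sharp\{\Es\in\Sf_{m,n}(H):\rf\ge1\}}{\sharp\Sf_{m,n}(H)},
}
and it suffices to prove \eqref{tyr}. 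Writing $\alpha_4=\sum_{i=0}^m a_iT^i$ and $\alpha_6=\sum_{j=0}^n b_jT^j$, the box $\|\alpha_4\|<H^3$, $\|\alpha_6\|<H^2$ contains $\asymp H^{3(m+1)+2(n+1)}=H^{3m+2n+5}$ integer vectors, the singular locus $\mathrm{disc}(\Es)=0$ removing only $O(H^{3m+2n+3})$ of them; thus $\sharp\Sf_{m,n}(H)\asymp H^{3m+2n+5}$, and \eqref{tyr} is the bound $\sharp\{\rf\ge1\}=o(H^{3m+2n+5})$.

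For the recasting I would appeal to \cite{BBD}, where for each of these four families the geometric Mordell--Weil lattice $L$ and the Galois action permuting the singular fibres are determined, so that $\rf=\rank(L^{\mathrm{Gal}})$ and, decisively, $\rf=0$ for generic $(\alpha_4,\alpha_6)$. Since the generators of $L$ are sections of bounded height, with coordinates $\big(x_P(T),y_P(T)\big)$ of bounded degree, the condition $\rf\ge1$ is equivalent to the existence of a $\Q$-point, above $(\alpha_4,\alpha_6)$, on the incidence variety of pairs $\big((\alpha_4,\alpha_6),P\big)$ with $P$ a non-torsion section of height at most an absolute constant. This variety dominates the parameter space $B$ with generic fibre of degree $\ge2$ (it contains the conjugate pair $\pm P$), and the generic vanishing $\rf=0$ forbids a rational section, so the positive-rank locus lies in a thin set of type II. Concretely, for $(m,n)=(1,1)$ the lattice is $A_1$ with generator the constant section $\big(-b_1/a_1,\ \sqrt{c^3+a_0c+b_0}\,\big)$, $c=-b_1/a_1$, whence $\rf\ge1$ reduces to the single condition that an explicit degree-$5$ polynomial $G(a_0,a_1,b_0,b_1)$ be a perfect square; the families $(1,2),(2,1),(2,2)$ lead likewise to finitely many conditions ``$F(a_i,b_j)$ is a perfect square'', with $F$ not a square as a polynomial.

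It then remains to count. The accompanying subvarieties --- isotriviality, torsion, vanishing denominators, and the degeneration $P=-P$ --- are proper in $B$, each constraining a coordinate of range $H^2$ or $H^3$, and so contribute $O(H^{3m+2n+3})=o(H^{3m+2n+5})$. For a genuine square condition $F=\square$ the heuristic density $|F|^{-1/2}$ predicts a count of order $H^{3m+2n+5-\delta}$, since over the bulk of the box $|F|$ is a large power of $H$ and squares are correspondingly sparse. To make this rigorous I would set $F=w^2$, fix all coordinates but one, and bound the integer points on the fibre curve $w^2=F$ by a square-sieve (or, in the higher-degree cases, by the determinant method), then sum over the remaining coordinates; organised dyadically over the sizes of the coordinates and of $F$, this should yield a fixed power saving $O(H^{3m+2n+5-\delta})$.

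The hard part will be to secure this saving \emph{uniformly over the weighted box}. Because the monomials of $F$ carry very different weights, $|F|$ fluctuates enormously --- collapsing in particular along the subloci where a leading coefficient $a_m$ or $b_n$ is small, or where $P$ degenerates --- and precisely there squares become dense; moreover the one-variable fibres $w^2=F$ may be conics of genus $0$, whose lattice points are counted by divisor or Pell estimates rather than by Siegel's theorem. Balancing the dyadic pieces against these degenerations, so that the total remains $O(H^{3m+2n+5-\delta})$ rather than merely $O(H^{3m+2n+5})$, is the crux. Once achieved, summing over the finitely many conditions and the four pairs $(m,n)\in\{1,2\}^2$ gives \eqref{tyr}, and with it Theorem~\ref{mainc}.
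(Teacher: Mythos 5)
Your proposal takes a genuinely different route from the paper, but it has gaps that are not just technical. The paper's proof runs entirely through Lemma~\ref{br}: by \cite{BBD} the condition $\rf\ge1$ forces an \emph{explicit resultant polynomial} (namely $M_{B,C}$, $B^2-4AC$, or $M_{B^2-4AC,A}$, of degree up to $8$) to be \emph{reducible over $\Q$}, and the number of parameter tuples in the weighted box for which this polynomial is reducible is then bounded by the Tur\'an sieve (Lemma~\ref{lt}), using finite-field counts of irreducible specializations mod $p$. This sidesteps entirely the ``uniformity over the weighted box'' difficulty that you correctly identify as the crux of your own counting step and then leave undone.

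The concrete gaps are these. First, your reformulation of $\rf\ge1$ as ``finitely many conditions $F(a_i,b_j)$ is a perfect square'' is correct only for $(m,n)=(1,1)$, where the relevant polynomial $M_{B,C}$ is an even quadratic and reducibility is indeed the single condition $a_1^4C(-b_1/a_1)=\square$ in $\Q$. For the other three cases, \cite{BBD} expresses the rank in terms of conjugacy classes of roots $\rho$ of $B^2-4AC$ (a quartic) for which $A(\rho)$ is a square \emph{in the number field $\Q(\rho)$}; the root $\rho$ need not be rational, so the positive-rank locus is not cut out by rational perfect-square conditions but by the reducibility of a degree-$8$ resultant, with all of its factorization types. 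Your ``incidence variety of bounded-height sections'' would, if set up rigorously, recover exactly this thin-set structure, but that setup (boundedness of heights of generators, parametrization of sections, absence of a rational section generically) is asserted rather than derived from \cite{BBD}, and a quantitative thin-set bound uniform over the weighted box is precisely the work you defer. Second, the reduction of \eqref{avr} to \eqref{tyr} via ``$\rf\le8$ by Shioda--Tate'' fails on the isotrivial sublocus: a constant curve $Y^2=X^3+aX+b$ lies in $\Sf_{m,n}(H)$ and has $\Es(\Q(T))=\Es(\Q)$, whose rank is not unconditionally bounded. The paper handles exactly this sublocus separately, bounding its total rank contribution by $O(H^5)$ via the average-rank theorem of \cite{bhargava_shankar}. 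Finally, even in the $(1,1)$ case your square-sieve/determinant-method count is only sketched; the paper's sieve gives the quantitative saving $O(H^{-1/3}(\log H\log\log H)^{2/3})$ with no case analysis over dyadic ranges.
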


We also consider the problem for other families of elliptic curves over $\Q(T)$, where $\alpha_2$ is not required to be zero. In the case of $\deg(\alpha_i)\leq2$ for $i=2,4,6$, we can rewrite the right hand side of~\eqref{Weq} as a polynomial in $T$, writing $\Es=\Es_{A,B,C}$ as
\es{\label{eint}
\Es_{A,B,C} \colon \quad Y^2 = A(X)T^2 +B(X)T+C(X)
}
where $A(X), B(X)$ and $C(X)-X^3$ are polynomials in $\Z[X]$ of degree $\leq2$. For $P_1,\dots,P_n\in\Z[X]$ we let $\|P_1,\dots,P_n\|:=\max(\|P_1\|,\dots,\|P_n\|)$.
We shall consider the following 3 families, the last one being different from $\mathcal S_{2,2}(H)$ only for the different ordering.\footnote{It would be possible to use alternative orderings also for these families.}  
\est{
\Sf_{0}(H)&:=\{\Es_{A,B,C}\mid A=0,\ \deg(B), \deg(C-X^3)\leq2,\ \|B,C\|\leq H,\, \Es \text{ non-singular}\},\\
\Sf_{\square}(H)&:=\{\Es_{A^2,B,C}\mid \deg(A)=0,\ \deg(B),\deg(C-X^3)\leq2,\ \|A,B,C\|\leq H,\, \Es \text{ non-singular}\},\\
\Sf_{\ell}(H)&:=\{\Es_{A,B,C}\mid \deg(A), \deg(B),\deg(C-X^3)\leq1,\ \|A,B,C\|\leq H,\, \Es \text{ non-singular}\}.
}
Notice that the case $\Sf_0$ corresponds to elliptic curves defined by $\eqref{Weq}$ 
with $\deg( \alpha_2) \leq 1$, $\deg (\alpha_4) \leq 1$ and 
$\deg (\alpha_6) \leq 1$. The case $\Sf_\square$ corresponds to $\eqref{Weq}$ with $\deg (\alpha_2) \leq 1$, $\deg (\alpha_4) \leq 1$
 and $\alpha_6$ 
of degree 2 with a square leading coefficient. Finally, the case $\Sf_\ell$ corresponds to $\eqref{Weq}$ with 
$\alpha_2(T)=0$ and $\deg \alpha_4(T) \leq 2$, $\deg \alpha_6(T) \leq 2$. The elliptic curves in all of these families have rank bounded by 5 over $\Q(T)$  (cf.~\cite{BBD}) 
whenever the elliptic curve is non-isotrivial. In our case ($\deg A, \deg B \leq 2$), it is not difficult to see that $\Es_{A,B,C}$ is iso-trivial if and only if $A=B=0$ (this is not 
true in general for elliptic curves over $\Q(T)$). In particular, the contribution of isotrivial cases can be easily bounded by~\cite{bhargava_shankar} 
(see Section~\ref{pmta}) and we have that the analogous of~\eqref{avr} and~\eqref{tyr} are equivalent in our case.

Also for these families we can show that almost all elliptic curves have rank zero over $\Q(T)$.
\begin{theo}\label{maina} 
For any $*\in\{0,\square,\ell\}$ let
$\Rf_*(H):=\{\Es_{A,B,C}\in \Sf_*(H)\mid r_{\Es_{A,B,C}}\geq1\}$.
Then,
\es{\label{mainae}
\lim_{H\to \infty }\frac{\sharp {\mathcal R}_*(H)}{\sharp \Sf_*(H)} =0.
}
\end{theo}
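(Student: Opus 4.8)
The plan is to prove the three cases uniformly by bounding the numerator of \eqref{mainae} directly, since \eqref{mainae} is precisely the assertion that $\sharp\Rf_*(H)=o(\sharp\Sf_*(H))$. A direct parametrization of the coefficients gives $\sharp\Sf_0(H)\asymp H^{6}$, $\sharp\Sf_\square(H)\asymp H^{7}$ and $\sharp\Sf_\ell(H)\asymp H^{6}$, and the isotrivial locus $A=B=0$ contributes $o(\sharp\Sf_*(H))$ by~\cite{bhargava_shankar}. It therefore suffices to count the non-isotrivial $\Es_{A,B,C}\in\Sf_*(H)$ carrying a non-torsion $\Q(T)$-rational point $P=(X(T),Y(T))$; here the bound $\rf\le5$ of~\cite{BBD} guarantees, as noted in the text, that this count also controls the average in the analogue of~\eqref{avr}.

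First I would reduce to sections of bounded complexity. Each surface in our families is a rational elliptic surface, so by the finiteness of the possible Mordell–Weil lattices (cf.~\cite{BBD}) a non-torsion section of least height has height bounded by an absolute constant, whence $\deg X(T),\deg Y(T)\le D$ for some absolute $D$. Thus a positive-rank curve carries a $\Q$-rational section belonging to one of finitely many degree-profiles, and the leading profile is that of a constant section $X=x_0\in\Q$. For such a section the condition $P\in\Es_{A,B,C}$ reads: the quadratic $A(x_0)T^2+B(x_0)T+C(x_0)$ is a perfect square in $\Q[T]$. This specializes cleanly in each family: for $\Sf_0$ it forces $x_0$ to be a rational root of the quadratic $B$ together with $C(x_0)=\square$; for $\Sf_\square$ (where $A=a^2$ supplies the two sections $Y=\pm aT$ at infinity of the conic bundle $S\to\mathbb A^1_X$) it forces $x_0$ to be a rational root of the cubic $B(X)^2-4a^2C(X)$; and for $\Sf_\ell$ it forces $x_0$ to be a rational root of the quartic $B(X)^2-4A(X)C(X)$ together with $A(x_0)=\square$. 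The remaining, non-constant, profiles are handled analogously after clearing denominators.

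The count is then carried out by reversing the roles of curve and section: for a fixed section datum $(x_0,e,f)$ (with $(eT+f)^2$ the square above), and its higher-degree analogues, the identity $P\in\Es_{A,B,C}$ becomes a system of affine-linear constraints on the coefficients of $A,B,C$, and one sums the number of integer coefficient vectors in the box $\|A,B,C\|\le H$ compatible with each datum over all data. The conic-bundle structure over $\mathbb A^1_X$ — the generic fibre having a $\Q$-rational point for $\Sf_0$ (since $A=0$ makes it rational) and for $\Sf_\square$ (via $Y=\pm aT$) — is what lets me enumerate the $\Q(T)$-sections explicitly and organize this bookkeeping.

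The main difficulty, and the heart of the argument, is that over $\C$ these constraints are of codimension $0$: the relevant polynomial in $x_0$ always has a root and the companion square condition is always met, so a constant section exists generically, reflecting the positive geometric rank of these surfaces. The entire saving is therefore arithmetic, coming from the requirement that the section be $\Q$-rational of height compatible with $H$ — concretely from the sparsity of the condition $A(x_0)=\square$ (resp.\ $C(x_0)=\square$) and from the divisibility constraints forcing $x_0=u/v$ with $u,v$ controlled by the coefficients. The hard part will be to make this sparsity quantitative and uniform in the section datum: one must bound the main contribution, coming from bounded $x_0$ and sections of height $\asymp H^{1/2}$, by a power $H^{d_*-\delta}$ with $\delta>0$, while showing that the tail over large $x_0$ and over sections of large height converges. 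This rests on square-sieve and lattice-point estimates for the number of $(A,B,C)$ in the box for which the auxiliary quadratic, cubic, or quartic acquires a rational root carrying the companion square value, and it is precisely here that the three orderings $\Sf_0,\Sf_\square,\Sf_\ell$ must be treated separately.
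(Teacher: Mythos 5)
Your reduction of positive rank to the existence of a low-degree section, and of constant sections to the conditions ``$x_0$ is a rational root of an auxiliary polynomial and the companion value is a square'', is essentially the content of Lemma~\ref{br} (quoted from~\cite{BBD}): positive rank forces the resultant/discriminant polynomial ($M_{B,C}$, $B^2-4A^2C$, or $M_{B^2-4AC,A}$) to be reducible over $\Q$. Up to that point you and the paper agree. The gap is in everything after: the entire quantitative content of the theorem is the estimate that the number of coefficient vectors in the box for which this auxiliary polynomial is reducible is $o(\sharp\Sf_*(H))$, and you leave exactly this step unproved, deferring it to unspecified ``square-sieve and lattice-point estimates'' and explicitly labelling it ``the hard part''. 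This cannot be waved at: the auxiliary polynomials are highly non-generic (for instance $M_{B,C}$ is an even quartic whose coefficients are correlated polynomials in six parameters), so no off-the-shelf count of reducible polynomials applies; moreover your constant-section bookkeeping misses the reducibility patterns that do not come from rational roots (e.g.\ $B^2-4A^2C$ splitting into two irreducible quadratics), which you dismiss with ``handled analogously''. The reduction to sections of absolutely bounded degree via minimal vectors of the Mordell--Weil lattice is also only asserted; one needs both the finiteness of the relevant lattices and a bound on the minimal norm of the nonzero Galois-invariant part, neither of which you supply.

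The paper sidesteps all of this with a different pivot: reducibility over $\Q$ implies reducibility modulo every prime, so it suffices to show that for most coefficient vectors the auxiliary polynomial is irreducible modulo some small prime. This is Lemma~\ref{lt}, a Tur\'an-sieve statement whose input is purely local: for each prime $p$ and each irreducible target $U\in\F_p[X]$ one counts solutions of an explicit congruence system (\eqref{sys1}, \eqref{sys2}, \eqref{sys3}) and sums over $U$ using the finite-field irreducibility counts of Lemmas~\ref{irpo}, \ref{even_polb} and \ref{even_lac_pol}. That computation is elementary and is carried out in full, yielding the explicit rate of Remark~\ref{rer}. To salvage your global approach you would have to actually prove the square-sieve and lattice-point bounds you invoke, uniformly in the section datum; as written, the proposal is a plausible plan rather than a proof.
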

\begin{remark}\label{rer}
Our method is based on a simple application of the Tur\'an sieve and actually gives quantitative bounds. In particular for Theorem~\ref{maina} we show that 
$$\frac{\sharp {\mathcal R}_*(H)}{\sharp \Sf_*(H)}=O(H^{-1/3} (\log H \log \log H)^{2/3}).$$
It is very likely that one could improve this bound by a more sophisticated method.
Our proof gives an upper bound for the number of reducible polynomials in certain sets of polynomials of low 
degree. These polynomials should behave as generic polynomials of the same degree and thus one would expect that $\frac{\sharp {\mathcal R}_*(H)}{\sharp \Sf_*(H)}=O(H^{-1+\varepsilon})$ (see \cite{kuba}). Some numerical experiments suggest that this should indeed be the case.
\end{remark}
\begin{remark}\label{rer2}
One of the key ingredient to apply Tur\'an sieve is Lemma~\ref{br}, which is not expected to generalize to the case of $\alpha_2$, $\alpha_4$ and $\alpha_6$ with degree large enough, in particular whenever the underlying elliptic surface is a $K3$ surface. However, it seems natural to expect 
a zero density result in this case too, as explained by Cowan (\cite{Cowan}).
\end{remark}

Conjecture~\ref{cowan} and Theorems~\ref{mainc} and~\ref{maina} suggest that the rank over $\Q(T)$ of elliptic curves is typically expected to be zero. Thus, the study of
 the Zariski density of $\Q$-rational points of \eqref{Weq} seen in the affine three space cannot, in general, be addressed by rank considerations only. However, in general the root number should be, under the Chowla and  squarefree conjectures, equidistributed and hence there should exist 
infinitely many specializations with rank $\geq 1$ over $\Q$, implying (conditionally) the Zariski density (see \cite{helfgott,desjardins}). This 
strategy does not apply whenever the root number is not equidistributed: this happens to be the case for potentially parity biased families as studied 
in \cite{BDD}, where it is proved that there are essentially 6 different classes of such non-isotrivial families of the form \eqref{Weq} with 
$ \deg(\alpha_2),\deg( \alpha_4),\deg( \alpha_6) \leq 2$. Among those 6 classes, two correspond to the families considered in this paper. These are 
\begin{align*}
{\Es}& \colon \quad wY^2 = X^3 + 3TX^2 + 3sX+sT \quad (*=0) \\
{\mathcal G} & \colon \quad  Y^2 = X^3 +3TX^2 + 3TX + T^2 \quad (*=\square) 
\end{align*}
It is proved unconditionally in \cite{desjardins2} that the family ${\mathcal G}$ has infinitely many integer specializations with negative root number. In \cite{desjardins3}, the authors investigate, in particular, the situation given by (sub)-families of ${\Es}$ 
and describe the possibilities to have families with constant root number.  
\medskip
~\\
The organization of the paper is the following. In Section~\ref{prel} we give some lemmas which we will need in the proofs. Among these, we mention in particular Lemma~\ref{br} (from~\cite{BBD}) which shows that it suffices to bound the number of reducible polynomials in certain sets. In Section~\ref{pmta}, we prove Theorem~\ref{maina} by showing how to obtain such bounds as a consequence of the Tur\'an sieve. In Section~\ref{pmtc} we show how the same proof can be adapted (and in fact simplified) to prove Theorem~\ref{mainc}.

\subsection*{Acknowledgments} The first and the last authors are supported  by the French ``Investissements d'Avenir" program, project ISITE-BFC (contract ANR-lS-IDEX-OOOB). They are also members of the Laboratoire de math\'ematiques de Besan\c con that receives support from the EIPHI Graduate School (contract ANR-17-EURE-0002). The second author is member of the INdAM group GNAMPA and his work is partially supported by PRIN 2017 ``Geometric, algebraic and analytic methods in arithmetic''.

\section{Preliminaries}\label{prel}
We start with the following standard application of the Tur\'an sieve. We include a proof for the sake of completeness.

\begin{lemma}\label{lt}
Let $f(a_1,\cdots,a_n,X) \in \Z[a_1,\cdots,a_n][X]$ with $n>1$. Let ${\mathcal P} $ be a set of primes such that $\mathcal P\cap[1,x]\sim C \frac{x}{\log x}$ as $x\to\infty$ for some $C>0$. Assume there exists $\delta>0$ such that for all $p\in\mathcal P$ the set
\est{
{\mathcal A}_p & = \{(a_1,\cdots,a_n) \in \F_p^n \mid f(a_1,\cdots,a_n,X) \mbox{ is irreducible in } \F_p[X] \}
}
satisfies $\sharp {\mathcal A}_p = \frac{p^n}{\delta}  + O(p^{n-1})$ as $p\to\infty$. Finally, for $ \boldsymbol H =(H_1,\dots,H_n)\in\R_{>0}^n$, let 
$$
{\mathcal B}(\boldsymbol H) :=\{(a_1,\cdots,a_n) \in \Z^n \mid |a_i| \leq H_i\  \forall i \mbox{ and }   f(a_1,\cdots,a_n,X) \mbox{ is reducible in } \Q[X] \}.
$$
Then, letting $H:=\min(H_1,\dots, H_n)$, as $H\to\infty$ we have
$$
\sharp {\mathcal B}(\boldsymbol H) \ll H_1\cdots H_n \cdot  H^{-1/3} (\log H \log \log H)^{2/3}.
$$
\end{lemma}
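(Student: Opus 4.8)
The plan is to run the Tur\'an sieve on the box $A := \{\boldsymbol a = (a_1,\dots,a_n)\in\Z^n : |a_i|\le H_i\}$, whose cardinality satisfies $\sharp A \asymp H_1\cdots H_n$, detecting reducibility through reduction modulo primes in $\mathcal P$. For each prime $p\in\mathcal P$ with $p\le z$ (the sifting range $z$ to be optimized at the end) I set $A_p := \{\boldsymbol a\in A : f(\boldsymbol a, X) \text{ irreducible in }\F_p[X]\}$, so that $\boldsymbol a\in A_p$ exactly when $\boldsymbol a\bmod p\in\mathcal A_p$, and I let $\omega(\boldsymbol a) := \sharp\{p\in\mathcal P : p\le z,\ \boldsymbol a\in A_p\}$. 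The conceptual input is that irreducibility descends: if $f(\boldsymbol a,X)=g(X)h(X)$ with $g,h\in\Z[X]$ of degree $\ge 1$ (Gauss's lemma), then for every prime $p$ not dividing the leading coefficient $c(\boldsymbol a)$ of $f(\boldsymbol a,X)$ the reduction preserves degrees and $f(\boldsymbol a,X)$ stays reducible modulo $p$. Hence any $\boldsymbol a\in\mathcal B(\boldsymbol H)$ can lie in $A_p$ only for the $p\mid c(\boldsymbol a)$, so $\omega(\boldsymbol a)\le \sharp\{p\mid c(\boldsymbol a)\} \ll \log(H_1\cdots H_n)/\log\log(H_1\cdots H_n)$, a quantity growing only logarithmically in the heights.

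Next I would compute the first two moments of $\omega$. Splitting $A$ into residue classes and using that each class modulo $p$ (resp.\ modulo $pq$) meets the box in $\sharp A/p^{n}+O(\cdots)$ (resp.\ $\sharp A/(pq)^{n}+O(\cdots)$) points, together with the hypothesis $\sharp\mathcal A_p = p^n/\delta + O(p^{n-1})$, gives
\[
\sharp A_p = \frac{\sharp A}{\delta} + O\!\left(\frac{\sharp A}{p} + \frac{\sharp A\,p}{H}\right),\qquad \sharp A_{pq} = \frac{\sharp A}{\delta^2} + O\!\left(\frac{\sharp A}{\delta}\Big(\frac1p+\frac1q\Big) + \frac{\sharp A\,pq}{H}\right),
\]
where $A_{pq}=A_p\cap A_q$ and the term $\sharp A\,p/H$ quantifies the failure of the box to be exactly equidistributed modulo $p$. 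Summing the first estimate over $p\in\mathcal P\cap[1,z]$ and using $\sharp(\mathcal P\cap[1,z])\sim Cz/\log z$ yields the mean $U := (\sharp A)^{-1}\sum_p \sharp A_p \sim (C/\delta)\,z/\log z$, valid once $z\le cH$ so that the error terms are of smaller order. For the variance I would use the standard identity
\[
\sum_{\boldsymbol a\in A}(\omega(\boldsymbol a)-U)^2 = \sum_p \sharp A_p\Big(1-\tfrac{\sharp A_p}{\sharp A}\Big) + \sum_{p\ne q}\Big(\sharp A_{pq} - \tfrac{\sharp A_p \sharp A_q}{\sharp A}\Big),
\]
the crucial point being that the main terms $\sharp A/\delta^2$ in $\sharp A_{pq}$ and in $\sharp A_p\sharp A_q/\sharp A$ cancel, so only error terms survive. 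Carrying out the double sum over primes (using $\sum_{p\le z}1/p\ll\log\log z$ and $\sum_{p\le z}p\ll z^2/\log z$) then leaves
\[
\sum_{\boldsymbol a\in A}(\omega(\boldsymbol a)-U)^2 \ll \sharp A\left(\frac{z\log\log z}{\log z} + \frac{z^4}{H(\log z)^2}\right).
\]

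Finally I would invoke the Tur\'an/Chebyshev bound $\sharp\{\boldsymbol a : \omega(\boldsymbol a)\le U/2\}\le 4U^{-2}\sum_{\boldsymbol a}(\omega(\boldsymbol a)-U)^2$. Since every $\boldsymbol a\in\mathcal B(\boldsymbol H)$ has $\omega(\boldsymbol a)=O(\log(H_1\cdots H_n)/\log\log(H_1\cdots H_n))$ while $U\asymp z/\log z$ grows like a positive power of $H$, in the regime of interest (heights $H_i$ polynomially comparable, as in the applications) one gets $\omega(\boldsymbol a)<U/2$ for all such $\boldsymbol a$ once $H$ is large, so $\mathcal B(\boldsymbol H)\subseteq\{\boldsymbol a:\omega(\boldsymbol a)\le U/2\}$ and
\[
\sharp\mathcal B(\boldsymbol H) \ll \sharp A\left(\frac{\log z\log\log z}{z} + \frac{z^2}{H}\right).
\]
Balancing the two terms gives $z\asymp (H\log H\log\log H)^{1/3}$ (which indeed satisfies $z\le cH$), and substituting recovers the claimed bound $\sharp\mathcal B(\boldsymbol H)\ll H_1\cdots H_n\cdot H^{-1/3}(\log H\log\log H)^{2/3}$. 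I expect the main obstacle to be the bookkeeping in the second moment: one must keep the two sources of error—the arithmetic error $O(p^{n-1})$ in $\sharp\mathcal A_p$ and the geometric error from equidistributing the box modulo $p$ and $pq$—separate, and confirm that after the leading terms cancel the residual double sum is genuinely of lower order than $U^2$, which is exactly what makes the sieve nontrivial and pins down the exponent $1/3$.
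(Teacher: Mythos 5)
Your proposal is correct and follows essentially the same route as the paper: both run the Tur\'an sieve over the box with local density $1/\delta+O(1/p)$ and box-equidistribution error $O(p/H)$, arrive at the same intermediate bound $\ll X\bigl(\tfrac{\log z\log\log z}{z}+\tfrac{z^2}{H}\bigr)$, and make the same choice $z\asymp(H\log H\log\log H)^{1/3}$ --- the only difference being that you reprove the sieve via the second-moment/Chebyshev inequality where the paper cites Cojocaru--Murty. Your handling of the descent of reducibility (permitting the $O(\log(H_1\cdots H_n)/\log\log(H_1\cdots H_n))$ exceptional primes dividing the leading coefficient, at the mild cost of assuming the $H_i$ polynomially comparable) is in fact more careful than the paper's bare assertion of the inclusion $\mathcal B(\boldsymbol H)\subseteq\mathcal B_{\mathcal P,z}(\boldsymbol H)$, which silently ignores tuples whose leading coefficient degenerates modulo $p$.
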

\begin{proof}
For all $p \in {\mathcal P}$, we define 
$$
{\mathcal A}_p(\boldsymbol H) = \{(a_1,\cdots,a_n) \in \Z^n \mid |a_i| \leq H_i\ \forall i \mbox{ and }   f(a_1,\cdots,a_n,X) \mbox{ is irreducible in } \F_p[X] \}.
$$
Let $X:=H_1\cdots H_n$. If $p \leq H$, $p\in\mathcal P$, then we have
\begin{eqnarray*}
\sharp {\mathcal A}_p(\boldsymbol H) = \frac{X}{p^n} \left( 1 + O\left(\frac{p}{H}\right)\right) \sharp {\mathcal A}_p =   \frac{X}{\delta} + O\left( \frac{X}{p}\right) + O\left(pX/H  \right)=\frac{X}{\delta} + R_p ,
\end{eqnarray*}
say. Moreover, by the Chinese Remainder Theorem, if $p, q \in {\mathcal P}$ satisfy $p, q \leq H^{1/2}$, then
\begin{align*}
\sharp ({\mathcal A}_p(H) \cap {\mathcal A}_q(H)) &= \frac{X}{(pq)^n} \left( 1 + O\left(\frac{pq}{H}\right)\right) \sharp {\mathcal A}_p  \sharp {\mathcal A}_q \\
&  = \frac{X}{\delta^2} +  O\left(  \frac{X}{p} \right) + O\left(  \frac{X}{q} \right) + O\left( pq X/H\right)= \frac{X}{\delta^2}  + R_{p,q},
\end{align*}
say. 
Now we define 
\begin{align*}
{\mathcal B}_{{\mathcal P},z}(\boldsymbol H) = \{(a_1,\cdots,a_n) \in \Z^n &\mid  |a_i| \leq H_i\ \forall i \mbox{ and } f(a_1,\cdots,a_n,X)  \\  &\quad \mbox{ is reducible in } \F_p[X] \mbox{ for all } p \in {\mathcal P}  \mbox{ with } p\leq z\}
\end{align*}
and we apply Tur\'an's sieve \cite[chapter 4]{cojocaru_murty}, which gives that for $z \leq H^{1/2}$ 
\est{
\sharp {\mathcal B}_{{\mathcal P},z}(\boldsymbol H) &\leq \frac{H^n}{\displaystyle \sum_{\substack{p \in {\mathcal P} \\ p\leq z}} \delta} + \frac{2}{\displaystyle \sum_{\substack{p \in {\mathcal P} \\ p\leq z}} \delta} \sum_{\substack{p \in {\mathcal P} \\ p\leq z}} |R_p| + \frac{1}{\big({\displaystyle \sum_{\substack{p \in {\mathcal P} \\ p\leq z}} \delta} \big)^2} \sum_{\substack{p, q \in {\mathcal P} \\ p, q\leq z}} |R_{p,q}|\\
&\ll X \frac{\log z}{z} + X \frac{\log z \log\log z}{z} + z X /H + z^2 X /H \\
&\ll X H^{-1/3} (\log H \log \log H)^{2/3},
}
upon choosing $z=  (H\log H \log\log H)^{1/3}$. The lemma then follows since $\sharp {\mathcal B}(\boldsymbol H) \leq \sharp {\mathcal B}_{{\mathcal P},z}(\boldsymbol H)$.
\end{proof}

When applying this lemma to our cases, we will need some results on the number of irreducible polynomials in $\F_p[x]$ satisfying certain properties. 
\begin{lemma}\label{irpo}
Let $q$ be a prime power.
The number of irreducible, monic polynomials of degree $n\geq 1$ is 
$
\frac{1}{n}\sum_{d\mid n} \mu(d)q^\frac{n}{d},
$
where $\mu$ denotes the M\"obius function.
\end{lemma}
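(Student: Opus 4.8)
The plan is to exploit the classical factorization of $X^{q^n}-X$ over $\F_q$ and then recover the counting formula by M\"obius inversion. Write $N_q(d)$ for the number of monic irreducible polynomials of degree $d$ in $\F_q[X]$; the goal is to show $N_q(n)=\frac1n\sum_{d\mid n}\mu(d)q^{n/d}$.

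First I would establish the identity
\est{
X^{q^n}-X=\prod_{d\mid n}\ \prod_{\substack{f\in\F_q[X]\text{ monic irreducible}\\ \deg f=d}} f(X).
}
Three facts underlie this. The roots of $X^{q^n}-X$ in a fixed algebraic closure $\overline{\F_q}$ are exactly the elements of $\F_{q^n}$, being the fixed points of the $n$-th iterate of Frobenius. The polynomial $X^{q^n}-X$ is separable, since its formal derivative is $q^nX^{q^n-1}-1=-1$ in characteristic dividing $q$, so it equals the product of the distinct monic irreducibles dividing it. Finally, a monic irreducible $f$ of degree $d$ divides $X^{q^n}-X$ if and only if $d\mid n$: a root $\alpha$ of $f$ generates $\F_{q^d}$ over $\F_q$, and $\alpha\in\F_{q^n}$ holds precisely when $\F_{q^d}\subseteq\F_{q^n}$, i.e. when $d\mid n$. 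Collecting the irreducible factors by degree then yields the displayed identity.

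Comparing degrees on both sides gives the arithmetic relation
\est{
q^n=\sum_{d\mid n} d\,N_q(d),
}
since each monic irreducible of degree $d$ contributes $d$ to the total degree and occurs exactly once. Setting $g(n):=q^n$ and $h(d):=d\,N_q(d)$, this reads $g(n)=\sum_{d\mid n}h(d)$, so M\"obius inversion gives $h(n)=\sum_{d\mid n}\mu(d)g(n/d)$, that is $n\,N_q(n)=\sum_{d\mid n}\mu(d)q^{n/d}$. Dividing by $n$ finishes the argument.

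The only genuine content is the field-theoretic step identifying irreducible factors with subfields, concretely the equivalence $\F_{q^d}\subseteq\F_{q^n}\iff d\mid n$ together with the existence and uniqueness of a finite field of each prime-power order; this is the step I would state most carefully. Everything else reduces to the separability computation and a routine M\"obius inversion.
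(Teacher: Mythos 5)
Your argument is correct and complete: the factorization of $X^{q^n}-X$ into the monic irreducibles of degree dividing $n$, the degree count $q^n=\sum_{d\mid n}dN_q(d)$, and M\"obius inversion are all carried out properly. The paper itself gives no proof but only cites Rosen's \emph{Number Theory in Function Fields}, where exactly this standard argument appears, so your route coincides with the intended one.
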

\begin{proof}
See e.g.~\cite[Corollary of Proposition 2.1]{rosen}.
\end{proof}

\begin{lemma}\label{even_polb}
Let $q$ be an odd prime power. The number of even, irreducible, monic polynomials of degree $n\geq 4$ is
$\frac{1}{n}\sum_{d|n\atop d\,\text{odd}}\mu(d)(q^{\frac{n}{2d}}-1).$
\end{lemma}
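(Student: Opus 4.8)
Since an even polynomial has even degree, we may assume $n$ is even, say $n=2m$ with $m\ge2$ (for odd $n$ there are no even polynomials of degree $n$, and the statement is read as being vacuous). The plan is to parametrise even polynomials through the substitution $x\mapsto x^2$ and then to count by analysing the roots in $\overline{\F_q}$. First I would note that every even monic polynomial of degree $n=2m$ is uniquely of the form $h(x^2)$ with $h$ monic of degree $m$, and that $h(x^2)$ can only be irreducible if $h$ is. Hence the quantity to be computed equals the number of monic irreducible $h$ of degree $m$ for which $h(x^2)$ remains irreducible.

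Fixing such an $h$ with a root $\alpha\in\F_{q^m}$ (note $\alpha\ne0$ since $m\ge2$) and a square root $\beta$ of $\alpha$ in $\overline{\F_q}$, the roots of $h(x^2)$ are exactly the square roots of the conjugates of $\alpha$, so $h(x^2)$ is irreducible iff $\beta$ has degree $2m$ over $\F_q$. Since $\F_q(\beta)\supseteq\F_q(\alpha)=\F_{q^m}$ and $[\F_q(\beta):\F_{q^m}]\le2$, this occurs iff $\beta\notin\F_{q^m}$, i.e. iff $\alpha$ is a non-square in $\F_{q^m}$. As the Frobenius preserves squares, all conjugates of $\alpha$ are simultaneously (non-)squares, so this is a well-defined condition on $h$. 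Consequently the number we want equals $U/m$, where $U$ is the number of non-square elements of $\F_{q^m}$ of degree exactly $m$ over $\F_q$; these split into Frobenius orbits of size $m$, each being the root set of one admissible $h$.

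To evaluate $U$ I would write $U=\tfrac12(G_m-D_m)$, where $G_m=\sum_{d\mid m}\mu(d)q^{m/d}$ is the number of elements of degree exactly $m$ (this is $m$ times the count of Lemma~\ref{irpo}) and $D_m=\sum_{\deg c=m}\chi(c)$, with $\chi$ the quadratic character of $\F_{q^m}^\times$. The key computation is the character sum over each subfield: for $e\mid m$ the restriction of $\chi$ to $\F_{q^e}^\times$ is trivial or not according to whether a generator of $\F_{q^e}^\times$ is a square in $\F_{q^m}$, which is governed by the parity of $(q^m-1)/(q^e-1)\equiv m/e \pmod 2$. Thus $\sum_{c\in\F_{q^e}^\times}\chi(c)$ equals $q^e-1$ when $m/e$ is even and $0$ when $m/e$ is odd, and Möbius inversion gives $D_m=\sum_{k\mid m,\,k\text{ even}}\mu(k)(q^{m/k}-1)$.

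Finally I would assemble the two sums: in $G_m-D_m$ the even-$d$ terms of $G_m$ cancel against the $q^{m/k}$ part of $D_m$, and using $\sum_{d\mid m}\mu(d)=0$ (valid since $m\ge2$) to rewrite the leftover $\sum_{k\mid m,\,k\text{ even}}\mu(k)$ as $-\sum_{d\mid m,\,d\text{ odd}}\mu(d)$, one obtains $G_m-D_m=\sum_{d\mid m,\,d\text{ odd}}\mu(d)(q^{m/d}-1)$. Dividing by $2m$ and rewriting $\{d\mid m:\ d\text{ odd}\}=\{d\mid n:\ d\text{ odd}\}$ with $m/d=n/(2d)$ yields the claimed formula. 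The main obstacle is the subfield character-sum evaluation of the third step, that is, pinning down precisely when the quadratic character of $\F_{q^m}$ restricts trivially to $\F_{q^e}^\times$, together with the Möbius bookkeeping required to recover the stated closed form; the remaining steps are routine.
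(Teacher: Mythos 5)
Your proof is correct, but it takes a genuinely different route from the paper: the paper disposes of this lemma by citing \cite[Theorem~3]{Cohen}, whereas you supply a complete, self-contained argument. Your decomposition --- writing an even monic polynomial of degree $n=2m$ as $h(x^2)$, observing that $h(x^2)$ is irreducible exactly when $h$ is irreducible with root $\alpha$ a non-square in $\F_{q^m}$ (since $\F_q(\beta)\supseteq\F_{q^m}$ forces $\deg\beta\in\{m,2m\}$), and then evaluating $D_m=\sum_{\deg c=m}\chi(c)$ by M\"obius inversion over subfields via the parity $(q^m-1)/(q^e-1)\equiv m/e\pmod 2$ --- is sound, and I checked the final bookkeeping: $G_m-D_m=\sum_{d\mid m,\,d\text{ odd}}\mu(d)(q^{m/d}-1)$, which after division by $2m=n$ and the relabelling $m/d=n/(2d)$ gives exactly the stated formula (e.g.\ it returns $2$ for $q=3$, $n=4$, which one can verify directly). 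Two points are worth making explicit when you write this up: the hypothesis $m\ge 2$ is needed both so that $0$ (for which $\chi(0)=0$ spoils the indicator $\tfrac12(1-\chi(c))$ for non-squares) does not occur among elements of degree exactly $m$, and so that $\sum_{d\mid m}\mu(d)=0$ holds in the last cancellation; both are guaranteed by $n\ge4$. The trade-off between the two approaches is the usual one: the citation is shorter, while your argument keeps the paper self-contained and makes transparent where the restriction to odd divisors $d$ comes from, namely the vanishing of the subfield character sums $\sum_{c\in\F_{q^e}^\times}\chi(c)$ precisely when $m/e$ is odd.
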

\begin{proof}
See~\cite[Theorem~3]{Cohen}.
\end{proof}
\begin{lemma}\label{even_lac_pol}
Let $q$ be an odd prime power, $a\in\F_q$, and $n\geq4$ even. The number of even, irreducible, monic polynomials of degree $n$ such that the coefficient of the $x^{n-2}$ term is $a$ is $\frac1{n}q^{n/2-1}+O(q^{n/4})$.
\end{lemma}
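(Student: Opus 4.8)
The plan is to parametrize even polynomials by $f(x)=g(x^{2})$ and reduce the statement to an equidistribution fact for the pair $(\text{trace},\ \text{quadratic character of the norm})$ of elements of $\F_{q^{m}}$, where $m:=n/2\ge 2$. First I would record the structural reduction. If $f$ is even of degree $n=2m$ then $f(x)=g(x^{2})$ for a unique monic $g$ of degree $m$, and the coefficient of $x^{n-2}$ in $f$ equals the coefficient of $y^{m-1}$ in $g$. If $g$ is reducible then so is $f$, so I may assume $g$ irreducible, with a root $\beta\in\F_{q^{m}}$. A classical criterion (cf.\ the methods of \cite{Cohen}) states that, for $q$ odd, $g(x^{2})$ is irreducible over $\F_{q}$ if and only if $\beta$ is a non-square in $\F_{q^{m}}$; moreover, writing $\mathrm{N}=\mathrm{N}_{\F_{q^{m}}/\F_{q}}$ for the norm, an element of $\F_{q^{m}}^{\times}$ is a square if and only if its norm is a square in $\F_{q}^{\times}$ (both statements follow from the cyclicity of the multiplicative groups). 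Since $g(0)=(-1)^{m}\mathrm{N}(\beta)$, the irreducibility of $f$ is equivalent to $\chi\big((-1)^{m}g(0)\big)=-1$, where $\chi$ denotes the quadratic character of $\F_{q}^{\times}$.

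With this in hand, counting the polynomials $f$ in the statement amounts to counting, with multiplicity $m$ (the number of conjugate roots, noting that both conditions below are Galois-invariant), the elements $\alpha\in\F_{q^{m}}$ of degree exactly $m$ over $\F_{q}$ with $\operatorname{Tr}_{\F_{q^{m}}/\F_{q}}(\alpha)=t$ and $\chi(\mathrm{N}(\alpha))=-1$, where $t:=-a$ is the value forced by $[x^{n-2}]f=a$. I would detect the trace condition by additive characters and the norm condition by the character $\chi_{m}:=\chi\circ\mathrm{N}$, which is exactly the quadratic character of $\F_{q^{m}}^{\times}$. Fixing a nontrivial additive character $\psi$ of $\F_{q}$ and using $\mathbf 1[\chi(\mathrm{N}\alpha)=-1]=\tfrac12\big(1-\chi_{m}(\alpha)\big)$ for $\alpha\neq0$, orthogonality gives
\[
\#\{\alpha\in\F_{q^{m}}:\operatorname{Tr}\alpha=t,\ \chi(\mathrm{N}\alpha)=-1\}=\tfrac12\big(q^{m-1}-\mathbf 1[t=0]\big)-\tfrac1{2q}\sum_{c\in\F_{q}^{\times}}\psi(-ct)\,\tau_{c},
\]
where $\tau_{c}:=\sum_{\alpha\in\F_{q^{m}}}\chi_{m}(\alpha)\,\psi(c\operatorname{Tr}\alpha)$ and the $c=0$ term has dropped out because $\chi_{m}$ is nontrivial.

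Each $\tau_{c}$ with $c\neq0$ is a quadratic Gauss sum of a nontrivial multiplicative character against a nontrivial additive character of $\F_{q^{m}}$, hence $|\tau_{c}|=q^{m/2}$. Thus the secondary term is $O(q^{m/2})$ and
\[
\#\{\alpha\in\F_{q^{m}}:\operatorname{Tr}\alpha=t,\ \chi(\mathrm{N}\alpha)=-1\}=\tfrac12 q^{m-1}+O(q^{m/2}).
\]
Next I would remove the $\alpha$ lying in a proper subfield $\F_{q^{d}}$ ($d\mid m$, $d<m$); their number is $\ll q^{m/2}$, since the largest such subfield has size at most $q^{m/2}$ and $m$ is fixed. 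Dividing by $m$ and substituting $m=n/2$ (so that $q^{m-1}=q^{n/2-1}$ and $q^{m/2}=q^{n/4}$) yields $\frac1{n}q^{n/2-1}+O(q^{n/4})$, as claimed.

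The hard part will be the structural reduction of the first paragraph: one must carefully verify the irreducibility criterion for $g(x^{2})$ and the norm characterization of squares, and check that the prescribed coefficient of $x^{n-2}$ transfers exactly to the trace condition (and that the non-square/square alternative contributes the same main term in either case). Once the problem is recast as equidistribution of $(\operatorname{Tr}\alpha,\chi(\mathrm N\alpha))$, the remaining input is only the elementary quadratic Gauss sum bound together with the routine removal of subfield elements, both of which are responsible for the error term $O(q^{n/4})$.
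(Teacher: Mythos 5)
Your proof is correct, but it takes a more self-contained route than the paper. The paper disposes of this lemma in two citations: Cohen's Theorem~2 supplies exactly your structural reduction (the bijection between even irreducible monic $f$ of degree $n$ with $[x^{n-2}]f=a$ and monic irreducible $P$ of degree $n/2$ with $P_{n/2-1}=a$ and $(-1)^{n/2}P_0$ a non-square), and Carlitz's Theorem~2 then gives an exact formula for the cardinality of that set, from which the asymptotic is read off. You instead prove both ingredients: the reduction via the substitution $f(x)=g(x^2)$ together with the criterion that $g(x^2)$ is irreducible iff $g$ is irreducible with non-square root (equivalently $\chi((-1)^{m}g(0))=-1$), and the count via orthogonality of additive characters for the trace condition plus the bound $|\tau_c|=q^{m/2}$ for the quadratic Gauss sums, followed by the removal of subfield elements. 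All the steps check out — in particular the identity $\chi(\mathrm N(\alpha))=\chi_m(\alpha)$, the translation of $[x^{n-2}]f=a$ into $\operatorname{Tr}(\beta)=-a$, and the division by $m=n/2$ to pass from roots to polynomials, which correctly produces the main term $\frac1n q^{n/2-1}$ and error $O(q^{n/4})$. What your approach buys is independence from Carlitz's exact formula (at the cost of rederiving a standard character-sum estimate); what the paper's approach buys is brevity. Either way the error term $O(q^{n/4})$ is the same, and it is more than sufficient for the application in the paper (where $n=8$ and only a power saving over the main term is needed).
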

\begin{proof}
By~\cite[Theorem~2]{Cohen} we have that the set of polynomials satisfying the above conditions is in bijection with
\est{
S_a:=\{P\in\F_q[x]\mid P \text{ monic irreducible}, \deg(P)=n', P_{n'-1}=a, (-1)^{n'}P_0\text{ non-square mod }q\},
}
where $n'=n/2$, $P_i$ is the $i$-th coefficient of $P(X)$ and where we used the fact that the condition of $(-1)^{n'}P_0$ (non-zero)  being a non-square modulo $q$ is equivalent to $(2,(q-1)/{e})=1$ with $e$ the order of $(-1)^{n'}P_0$ modulo $q$. The lemma then follows since Theorem~2 in~\cite{carlitz} gives an exact formula for the cardinality of $S_a$, implying in particular that $|S_a|=\frac1{n}q^{n/2-1}+O(q^{n/4})$.
\end{proof}

Finally, we give some bounds for the rank which follow immediately from~\cite{BBD}. We need the following definition
\est{
M_{P_1,P_2}(X) := \Res_Y(P_1(Y),X^2-P_2(Y))
}
for two polynomials $P_1,P_2\in\Z[X]$, with $P_2\neq0$, where the resultant is computed with respect to the variable $Y$. Also, given a polynomial $P\in\Z[X]\setminus\{0\}$, we denote by $\Omega(P)$ the number of irreducible factors of $P$ counted with multiplicity.

\begin{lemma}\label{br}
Let $\Es_{A,B,C}$ as in~\eqref{eint} with $\deg(A),\deg(B)\leq2$ and $C$ monic of degree $3$. Also assume $A$ and $B$ are not both zero. Then $r_{\Es_{A,B,C}}\leq5$. Moreover,
\est{
r_{\Es_{A,B,C}}\leq
\begin{cases}
\Omega(M_{B,C})-1& \text{if $A=0$,}\\
\Omega(B^2-4AC)-1& \text{if $A\in\Z^2_{\neq0}$, or if $A\in\Z$ and $\deg(B)\leq1$}\\
\Omega(M_{B^2-4AC,A})-1& \text{if $\deg(A)=1$.}
\end{cases}
}
\end{lemma}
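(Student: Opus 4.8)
The plan is to derive all the rank bounds from the results of~\cite{BBD}, which bound $r_{\Es_{A,B,C}}$ in terms of the reducibility properties of certain auxiliary polynomials attached to the bad fibers of the elliptic surface. The uniform bound $r_{\Es_{A,B,C}}\leq5$ for non-isotrivial rational elliptic surfaces is a consequence of the Shioda--Tate formula: since the surface defined by~\eqref{eint} with $\deg(A),\deg(B)\leq2$ and $C$ monic of degree $3$ is rational (geometric genus zero), the N\'eron--Severi group has rank $10$, and the Mordell--Weil rank over $\Q(T)$ is bounded by $10$ minus the contribution of the zero section, the general fiber, and the components of the reducible fibers, yielding at most $8$ geometrically and at most $5$ for the arithmetic rank once one accounts for the trivial lattice. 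I would simply invoke~\cite{BBD} for the precise constant, since the statement says these bounds ``follow immediately'' from that reference.

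For the case-by-case refinement, the key observation is that each reducible fiber over a place $v$ of $\Q(T)$ where the curve has split (or potentially split) multiplicative reduction contributes to the component group, and the Mordell--Weil rank is bounded by the number of such reducible fibers minus one (the $-1$ coming from a relation in the Shioda--Tate formula, or equivalently from the rank of the trivial lattice). The plan is therefore to identify, in each of the three cases, the polynomial whose distinct irreducible factors index these bad fibers, and then to note that $\Omega$ of that polynomial (counting multiplicity) gives an upper bound for the number of factors, hence for the rank plus one.

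Concretely, I would proceed as follows. When $A=0$, the curve is $Y^2=B(X)T+C(X)$, which is a quadratic twist-like family in $T$; the discriminant locus in $T$ is controlled by the resultant $M_{B,C}(X)=\Res_Y(B(Y),X^2-C(Y))$, and I would show that the bad fibers correspond to the irreducible factors of $M_{B,C}$, giving $r_{\Es_{A,B,C}}\leq\Omega(M_{B,C})-1$. When $A$ is a nonzero constant (or a constant with $\deg(B)\leq1$), completing the square in $T$ transforms $Y^2=AT^2+B(X)T+C(X)$ into a form whose discriminant in $T$ is $B^2-4AC$, so the bad places are the roots of $B(X)^2-4A(X)C(X)$ and the bound becomes $\Omega(B^2-4AC)-1$. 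When $\deg(A)=1$, the leading behavior in $T$ changes the relevant invariant and one must again take a resultant, this time $M_{B^2-4AC,A}$, to account for the place at infinity and the interaction between the vanishing of $A$ and the discriminant. In each case I would verify that the polynomial is nonzero (using that $A,B$ are not both zero and $C$ is monic of degree $3$, so no degeneracies occur) and extract the bound directly from the corresponding statement in~\cite{BBD}.

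The main obstacle is matching the precise formulation in~\cite{BBD} to the three cases here, in particular checking that the resultants $M_{B,C}$ and $M_{B^2-4AC,A}$ are indeed the correct invariants encoding the bad reduction and that they are nonzero under the stated hypotheses (so that $\Omega$ is well-defined and finite). Since the statement asserts the bounds ``follow immediately'' from~\cite{BBD}, I expect the proof to consist of a short reduction: confirm the genus-zero/rational-surface hypothesis applies, identify the reducible-fiber-counting polynomial in each regime, and cite the relevant inequality, rather than reproving the Shioda--Tate analysis from scratch.
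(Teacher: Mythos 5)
Your proposal ultimately defers to \cite{BBD} for the precise inequalities, which is indeed all the paper does: its proof is a citation of \cite[Theorem~1]{BBD} plus one substantive remark. The problem is that the mechanism you describe for why the bounds hold is not the right one, and in places it is backwards. In the Shioda--Tate formula for a rational elliptic surface the Mordell--Weil rank equals $8$ minus the total number of non-identity components of the reducible fibers, so reducible fibers \emph{decrease} the rank; the rank is not ``bounded by the number of reducible fibers minus one.'' Moreover, the roots of $B^2-4AC$ and of the resultants $M_{B,C}$ and $M_{B^2-4AC,A}$ are values of $X$, not of $T$, so they do not index places of $\Q(T)$ or bad fibers at all. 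What they index are the evident sections: for instance, when $A=0$ a root $\rho$ of $B$ with $C(\rho)$ a square in $\Q(\rho)$ yields the constant point $(\rho,\sqrt{C(\rho)})$, and \cite[Theorem~1]{BBD} expresses $r_{\Es_{A,B,C}}$ as a count of conjugate classes of such roots; each class corresponds to a distinct irreducible factor of the relevant polynomial, whence the bound by $\Omega(\cdot)-1$. A ``count the reducible fibers'' reading would not produce these inequalities, so the reduction you sketch would not go through as stated, even though the final appeal to \cite{BBD} rescues the conclusion.

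You also miss the one case where the citation is not immediate, which is the only point the paper actually argues: $A\in\Z\setminus\Z^2$ with $\deg(B)\leq1$. There \cite{BBD} gives the rank as the number of conjugate classes of roots $\rho$ of $B^2-4AC$ such that $A$ is a square in $\Q(\rho)$, which is not literally of the form $\Omega(B^2-4AC)-1$. One needs the observation that $B^2-4AC$ has degree $3$, that a non-square integer cannot become a square in an odd-degree extension of $\Q$, and hence that the count vanishes unless $B^2-4AC$ has an irreducible quadratic factor, in which case $\Omega(B^2-4AC)-1=1$ bounds the count. Your proposal lumps this case together with $A\in\Z^2_{\neq0}$ and asserts the bound without supplying this argument, which is a genuine gap relative to the statement being proved.
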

\begin{proof}
A formula for the rank valid for $\deg(A),\deg(B),\deg(C-X^3)\leq2$ is given in~\cite[Theorem~1]{BBD}. The claimed inequalities then follow easily. We only remark that in the case $A\in\Z\setminus\Z^2$ and $\deg(B)\leq1$,~\cite{BBD} gives that $r_{\Es_{A,B,C}}$ is the number of conjugate classes of roots $\rho$ of $B^2-4AC$ such that $A$ is a square in $\Q(\rho)$. Since $B^2-4AC$ has degree $3$, then such a number is zero unless $B^2-4AC$ has an irreducible factor of degree $2$ and the claimed inequality follows.
\end{proof}

\section{Proof of Theorem~\ref{maina}}\label{pmta}
In this section we prove that~\eqref{mainae} holds for the three families considered. We remark that this implies also that the average rank is zero. Indeed, since for $A,B$ not both zero we have $r\leq5$ by Lemma~\ref{br}, it is sufficient to bound the contribution from $A=B=0$: recall that in our situation, $A=B=0$ correspond exactly to the 
isotrivial case. These iso-trivial families are perhaps most naturally excluded from the definition of the average rank. In any case, one can bound their contribution by~\cite{bhargava_shankar}, since for $H$ sufficiently large we have the very crude bound
\est{
\sumstar_{\deg(C-X^3)\leq2\atop \|C\|\leq H}r_{\Es_{0,0,C}}\leq2 \sumstar_{|c_0|\leq 55 H^3,|c_1|\leq 28H^2}r_{\Es_{0,0,X^3+c_1X+c_0}}\ll H^5
} 
which is sufficient for our purposes, where $\sumstar$ indicates that singular curves are to be excluded from the sum.

\subsection{The case $*=0$}
By Lemma~\ref{br} it suffices to show that almost all polynomials $B,C\in\Z[X]$ with $\deg(B),\deg(C-X^3)\leq2$ are such that $M_{B,C}$ is irreducible. We let $B(X) = b_2X^2 + b_1X + b_0$ and $C(X) = X^3 + c_2X^2+c_1X+c_0$. Then,
\begin{align*}
M_{B,C}(X) &= b_2^3X^4 + \big(-2c_0b_2^3 + (c_1b_1 + 2c_2b_0)b_2^2 + (-c_2b_1^2 - 3b_0b_1)b_2 + b_1^3\big)X^2+{}  \\ 
&\quad+ c_0^2b_2^3 + \big(-c_0c_1b_1 + (-2c_0c_2 + c_1^2)b_0\big)b_2^2 + \big(c_0c_2b_1^2 + (-c_1c_2 + 3c_0)b_0b_1+{} \\ 
&\quad+(c_2^2 - 2c_1)b_0^2\big)b_2 + (-c_0b_1^3 + c_1b_0b_1^2 - c_2b_0^2b_1 + b_0^3).
\end{align*}
Given an irreducible $U(X)=u_4X^4+u_2X^2+u_0\in\F_p[X]$ and a prime $p\geq5$ we need to count the number $\mathcal N_U$ of $(b_2,b_1,b_0,c_2,c_1,c_0)\in\F_p[x]^6$ such that $M_{B,C}(X)\equiv U(X)\mod p,$ i.e. 
\es{\label{sys1}
\begin{cases}
b_2^3 &\equiv  u_4,\\
(-2c_0b_2^3 + (c_1b_1 + 2c_2b_0)b_2^2 + (-c_2b_1^2 - 3b_0b_1)b_2 + b_1^3) &\equiv u_2,\\[0.4em]
\parbox[c]{.7\textwidth}{
$c_0^2b_2^3 + \big(-c_0c_1b_1 + (-2c_0c_2 + c_1^2)b_0\big)b_2^2 + \big(c_0c_2b_1^2 + (-c_1c_2 + 3c_0)b_0b_1+ {}+(c_2^2 - 2c_1)b_0^2\big)b_2 +(-c_0b_1^3 + c_1b_0b_1^2 - c_2b_0^2b_1 + b_0^3).
$}&\equiv u_0.
\end{cases}
}
It is convenient to assume $ p\equiv 2\mod 3$ so that the cubic map is an automorphism of $\F_p$. If $u_4\equiv 0$, then one easily sees that the system has $\mathcal N_U=p^3$ solutions. If $u_4\nequiv 0$, then the first two equations determine unique values for $b_2$ and $c_0$ respectively. Inserting these values in the third equation, this reduces to 
$$\Delta_B b_2^4\bigg(c_1+\frac{3 b_1^2 - 4 b_1 b_2 c_2 + \Delta_B}{4 b_2^2}\bigg)^2\equiv \tilde\Delta_U,$$
where $\Delta_B:=b_1^2-4b_0b_2$ and $\tilde\Delta_U:=u_2^2-4u_0u_4$ and for some (non-zero) $b_2$ determined by $u_4$. Since $\tilde\Delta_U\nequiv 0$, this equation has two solutions in $c_1$ if $\tilde\Delta_U\Delta_B$ is a non-zero square modulo $p$ and no solutions otherwise. Denoting the Legendre symbol by $(\frac \cdot\cdot)$ we obtain
\est{
\mathcal N_U&=\sum_{b_0,b_1,c_2\, \tn{mod } p}\bigg(\bigg(\frac{\Delta_B^2}{p}\bigg)+\bigg(\frac{\tilde\Delta_U\Delta_B}{p}\bigg)\bigg)=p\sum_{b_0,b_1\, \tn{mod } p}\bigg(\bigg(\frac{\Delta_B^2}{p}\bigg)+\bigg(\frac{\tilde\Delta_U\Delta_B}{p}\bigg)\bigg)=p^3-p^2.
}
Thus, letting
${\mathcal A}_p := \{(b_2,b_1,b_0,c_2,c_1,c_0) \in \F_p^6 \mid M_{B,C}(X) \mbox{ is irreducible in } \F_p[X] \}$, we have
\begin{align*}
    \sharp\mathcal{A}_p &= \sum_{\substack{U\in\F_p[X], \deg U\leq 4\\U\text{ even, irreducible}}}\mathcal N_U = \sum_{\substack{U\in\F_p[X], \deg U= 4\\U\text{ even, irreducible}}}p^3+O(p^5),
    = \frac{p^6}{4} + O(p^5)
\end{align*}
by Lemma~\ref{even_polb}. Thus, we obtain the theorem in the case $*=0$ (and error as given in Remark~\ref{rer}) by applying Lemma~\ref{lt} with $\mathcal{P}=\{p\equiv 2\bmod 3\}$ (so that $C=1/2$) and $H_1=\cdots =H_6=H$.

\subsection{The case $*=\square$}
By Lemma~\ref{br} it suffices to show that almost all polynomials $A\in\Z$, $B,C\in\Z[X]$ with $\deg(B),\deg(C-X^3)\leq2$ are such that $B^2-4A^2C$ is irreducible. We let $B$ and $C$ as in the previous subsection and $A=k$. We have
\begin{align*}
B(X)^2-4A^2 C(X) &= b_2^2 X^4 + (2b_1b_2 - 4k^2)X^3 + (2b_0b_2 + (b_1^2 - 4k^2c_2))X^2+{} \\
&\quad+ (2b_0b_1 - 4k^2c_1)X + (b_0^2 - 4k^2c_0).
\end{align*}
Given a prime $p\geq3$ and any $U(X)=u_4X^4+\cdots+ u_0\in\F_p[X]$ irreducible, we need to find the number of solutions $\mathcal N_p$ to $B^2-4A^2C\equiv U,$ i.e. the number of solutions to
\es{\label{sys2}
\begin{cases}
b_2^2 &\equiv  u_4,\\
2b_1b_2 - 4k^2 &\equiv u_3,\\
2b_0b_2 + (b_1^2 - 4k^2c_2)&\equiv u_2,\\
2b_0b_1 - 4k^2c_1 &\equiv u_1,\\
b_0^2 - 4k^2c_0 &\equiv u_0.
\end{cases}
}
First notice that we can assume $k\nequiv0$ since otherwise $B^2-4k^2C$ cannot be irreducible.
If $u_4\equiv 0$, we have $b_2\equiv0$ and we then obtain $(1+(\frac{u_3}{p}))$ choices for $k$. Choosing among the $p^2$ possibilities for the couple $(b_1, b_0)$ the remaining variables $c_2$, $c_1$ and $c_0$ are uniquely determined and thus $\mathcal N_U =(1+(\frac{u_3}{p}))p^2$.

If $u_4\nequiv 0$, we have $(1+(\frac{u_4}{p}))$ choices for $b_2$. Each of these give $(1+(\frac{2b_1b_2-u_3}{p}))$ possible choices for $k$. 
Since $c_2$, $c_1$ and $c_0$ are uniquely determined by the other variables we thus obtain
\est{
\mathcal N_p=\sum_{b_1,b_2\mod p}\bigg(1+\bigg(\frac{u_4}{p}\bigg)\bigg)\bigg(1+\bigg(\frac{2b_1b_2-u_3}{p}\bigg)\bigg)=\bigg(1+\bigg(\frac{u_4}{p}\bigg)\bigg)\bigg(p^2-p+p\bigg(\frac{-u_3}{p}\bigg)\bigg).
}
Thus, for $ {\mathcal A}_p := \{(k,b_2,b_1,b_0,c_2,c_1,c_0) \in \F_p^7 \mid B^2-4A^2C \mbox{ is irreducible in } \F_p[X] \}$ we have
\begin{align*}
   \sharp\mathcal{A}_p &= \sum_{\substack{U\in\F_p[X]\text{ irreducible }\\\deg U\leq 4}}\mathcal N_U = 
   \sum_{\substack{U\in\F_p[X]\text{ irreducible }\\\deg U= 4}}\left(1+\leg{u_4}{p}\right)p+O(p^6)
  \\
   &= p^2\sum_{\substack{U\in\F_p[X]\text{ irreducible }\\\deg U= 4}}1+ O(p^6)   = \frac{p^7}{4} + O(p^6),
\end{align*}
by Lemma~\ref{irpo}. The theorem in this case then follows as before.

\begin{remark}
It would be interesting to have the result also in the case with $A$ in place of $A^2$. In this case the rank is the number of conjugate classes of roots $\rho$ of $B^2-4AC$ such that $A$ is a square in $\Q(\rho)$. In order to show this number is typically zero one would need to prove that the resolvent cubic of $B^2-4kC$ is almost always irreducible.  
\end{remark}

\subsection{The case $*=\ell$}
By Lemma~\ref{br} it suffices to show that almost all polynomials $A,B,C\in\Z[X]$ with $\deg(A),\deg(B),\deg(C-X^3)\leq1$ are such that $M_{B^2-4AC,A}$ is irreducible. We let $A(X)=a_1x+a_0$, $B(X) = b_1X + b_0$ and $C(X) = X^3 +c_1X+c_0$. If $a_1\nequiv0$, which we can assume at a cost of an admissible error, then
\est{
a_1^{-1}M_{B^2-4AC,A}(X)&=   - 4 X^8+ 12 a_0 X^6 + (-12 a_0^2 + a_1 b_1^2 - 
    4 a_1^2 c_1) X^4+ (4 a_0^3 + 2 a_1^2 b_0 b_1 + \\
&\quad- 2 a_0 a_1 b_1^2 - 
    4 a_1^3 c_0 + 4 a_0 a_1^2 c_1) X^2 +(a_1^3 b_0^2 - 2 a_0 a_1^2 b_0 b_1 + 
   a_0^2 a_1 b_1^2).
}
Let $p\geq 5$ be prime and $U(X)=-4X^8+u_6X^6+\cdots +u_2X^2+u_0\in \F_p[X]$ be even and irreducible. We have that $a_1^{-1}M_{B^2-4AC,A}\equiv U$ is equivalent to 
\es{\label{sys3}
\begin{cases}
a_0 & \equiv  u_6/12, \\
-12 a_0^2 + a_1 b_1^2 -   4 a_1^2 c_1& \equiv u_4, \\
4 a_0^3 + 2 a_1^2 b_0 b_1 - 2 a_0 a_1 b_1^2 -     4 a_1^3 c_0 + 4 a_0 a_1^2 c_1& \equiv u_2,\\
a_1^3 b_0^2 - 2 a_0 a_1^2 b_0 b_1 +    a_0^2 a_1 b_1^2& \equiv u_0.
\end{cases}
}
Notice that we can't have $a_1\equiv 0$ since the last equation would imply $u_0\equiv 0$ which is not possible since $U$ is irreducible. We then solve the second equation in $c_1$ inserting the result in the last two equations reducing them to
\est{
\begin{cases}
c_0& \equiv 4(432 a_1^2 b_0 b_1 - 216 u_2 - 18 a_1 b_1^2 u_6 - 18 u_4 u_6 - u_6^3)/(864 a_1^3)\\
 (b_0 - b_1 u_6/(12 a_1))^2& \equiv u_0/a_1^3. 
\end{cases}
}
The second equation implies that $u_0/a_1$ is a square modulo $p$, and so we have $(p-1)/2$ possible choices for $a_1$. For each of these and for any choice of $b_1$ we then have two possibilities for $b_0$, whereas $c_0$ is determined by the remaining variables. It follows that the above system has $p^2-p$ solutions.

For $ {\mathcal A}_p := \{(a_0,a_1,b_0,b_1,c_0,c_1) \in \F_p^6 \mid M_{B^2-4AC,A} \mbox{ is irreducible in } \F_p[X] \}$ we then have
\begin{align*}
   \sharp\mathcal{A}_p &= p^2 \sum_{\substack{U\in\F_p[X],\ \deg U= 8,\\ U\text{ monic, even, irreducible }}}\mathcal N_U +O(p^5)= \frac{p^6}{8} + O(p^5)
\end{align*}
by Lemma~\ref{even_polb}. The theorem in this case then follows as before.

\section{Proof of Theorem~\ref{mainc}}\label{pmtc}
First, we observe that by the same argument given at the beginning of the previous section (which is not wasteful in this case), it suffices to prove~\eqref{tyr}. 

The case of $m=n=2$ is essentially identical to the case $*=\ell$ with the only difference of the different ordering which can be accounted for by choosing appropriately the $H_i$ in Lemma~\ref{lt}.

The case of $m=n=1$ corresponds to elliptic curves with $A=0$, $\deg(B)\leq 1$, $\deg(C-x^3)\leq1$. This is a simpler version of the case $*=0$ as now  we have $c_2=b_2=0$ (and thus $U$ has degree $\leq2$). Then,~\eqref{sys1} reduces to 
\est{
\begin{cases}
 b_1^3 &\equiv u_2,\\[0.4em]
-c_0b_1^3 + c_1b_0b_1^2  + b_0^3
&\equiv u_0.
\end{cases}
}
and one immediately sees that this system has $p^2$ solutions if $5\leq p\equiv 2\mod 3$. There are $\frac12 p^2+O(p)$ irreducible even quadratic polynomials $U$, since there are $(p-1)/2$ squares in $\mathbb{F}_p$, and so $M_{B,C}$ is irreducible mod~$ p$ for $p^4/2+O(p^3)$ choices of the parameters modulo $p$. The result then follows by Lemma~\ref{lt}.

The case of $m=1,n=2$ corresponds to elliptic curves with $\deg(A)=0$, $\deg(B)\leq 1$, $\deg(C-x^3)\leq1$, which is very similar to the case $*=\square$, with the difference that now we have $A$ in place of $A^2$ (and thus $k$ in place of $k^2$) and $c_2=b_2=0$ (and thus $U$ has degree $\leq3$). Thus,~\eqref{sys2} becomes
\est{
\begin{cases}
 - 4k &\equiv u_3,\\
b_1^2&\equiv u_2,\\
2b_0b_1 - 4kc_1 &\equiv u_1,\\
b_0^2 - 4kc_0 &\equiv u_0.
\end{cases}
}
This system has $(1+(\frac{u_2}{p}))p$ solutions if $u_3\nequiv 0$, $O(p^2)$ solutions if $u_3\equiv0$ and $4u_0u_2\equiv u_1^2$, and $O(p)$ solutions if $u_3\equiv0$ and $4u_0u_2\nequiv u_1^2$. It follows that the number of choices of the parameters such that $B^2-4AC$ is irreducible modulo $p$ is
\begin{align*}
 \sum_{\substack{U\in\F_p[X]\text{ irred. }\\\deg U\leq 4}}\bigg(1+\bigg(\frac{u_3}{p}\bigg)\bigg)p+O(p^5)&=\sum_{u_4\, \tn{mod } p}\ \sum_{\substack{U\in\F_p[X]\text{ irred. monic}\\\deg U= 4}}\bigg(1+\bigg(\frac{u_3 u_4}{p}\bigg)\bigg)p+O(p^5)\\
&=\frac{p^6}{4}+O(p^5)
\end{align*}
and the result follows.

Finally, the case of $m=2,n=1$ corresponds to elliptic curves with $A(X)=a X$, $a\in\Z$, $\deg(B)\leq 1$, $\deg(C-x^3)\leq1$. This is analogous to the case $*=\ell$, with the difference that now we have $a_0=0$.  Thus, for $U(X)=-4X^8+u_6X^6+\cdots +u_0\in \F_p[X]$ even and irreducible, the system~\eqref{sys3} becomes
\est{
\begin{cases}
0 & \equiv  u_6, \\
a_1 b_1^2 -   4 a_1^2 c_1& \equiv u_4, \\
2 a_1^2 b_0 b_1  -     4 a_1^3 c_0 & \equiv u_2,\\
a_1^3 b_0^2  & \equiv u_0.
\end{cases}
}
If $u_6\equiv 0$ and $5\leq p\equiv 2$ mod~$3$ this has $p^2-p$ solutions. By Lemma~\ref{even_lac_pol} we then have that $M_{B^2-4AC,A}$ is irreducible for $\frac1{8}p^5+O(p^4)$ choices of the parameters and the result follows.

\end{document}